\newtheorem{theorem}{Theorem}
\newtheorem{lemma}[theorem]{Lemma}
\newtheorem{corollary}[theorem]{Corollary}
\theoremstyle{definition}
\newtheorem{definition}[theorem]{Definition}
\newtheorem{example}[theorem]{Example}
\newtheorem{remark}[theorem]{Remark}
\newcommand{\m}{M}
\newcommand{\R}{{\mathbb R}}
\begin{document}

\markboth{A.M. Candela, A. Romero and M. S\'anchez}
{Completeness of Relativistic Particles}

\title{Completeness of Trajectories of Relativistic \\ Particles
Under Stationary Magnetic Fields\footnote{Partially supported by Spanish Grants with FEDER
funds MTM2010-18099 (MIC\-INN) and P09-FQM-4496 (J. de Andaluc\'\i a).}}

\author{{\bf A.M. Candela\footnote{Partially supported by M.I.U.R. (research funds ex 40\% and 60\%) and by
the G.N.A.M.P.A. Research Project ``Analisi Geometrica sulle Variet\`a
di Lorentz ed Applicazioni alla Relativit\`a Generale 2012''.}, A. Romero$^{,1}$  and
M. S\'anchez$^{2}$}\\
\\
{\small $^\dagger$Dipartimento di Matematica, Universit\`a degli Studi di Bari ``A. Moro'',}\\
{\small Via E. Orabona 4, 70125 Bari, Italy}\\
{\small \texttt{candela@dm.uniba.it}}\\
{\small $^{1,2}$Departamento de Geometr\'{\i}a y Topolog\'{\i}a,}\\
{\small Facultad de Ciencias, Universidad de Granada,}\\
{\small 18071 Granada, Spain}\\
{\small $^1$\texttt{aromero@ugr.es},
$^2$\texttt{sanchezm@ugr.es}}}
\date{}

\maketitle

\begin{abstract}
The second order differential equation $\frac{D\dot\gamma}{dt}(t)\
=\ F_{\gamma(t)}(\dot\gamma(t)) - \nabla V(\gamma(t))$ on a
Lorentzian manifold describes, in particular, the dynamics of
particles under the action of a electromagnetic field $F$ and a
conservative force $-\nabla V$. We provide a  first study on the
extendability of its solutions, by imposing  some natural
assumptions.
\end{abstract}
\bigskip

\noindent
{\it Key words and phrases:}{Second order differential equation; Lorentzian manifold;
completeness of inextensible trajectories; electromagnetic field;
stationary and conformal vector fields.}

\section{Introduction}

Let $(\m,g)$ be a (connected, finite--dimensional) Lorentzian
manifold and denote by $\pi : \m \times \R \longrightarrow \m$ the
natural projection. Giving a (1,1) smooth tensor field $F$ along
$\pi$ and a smooth vector field $X$ along $\pi$, let us consider
the second order differential equation
\[
\hspace*{2.9cm}\frac{D\dot\gamma}{dt}(t)\ =\ F_{(\gamma(t),t)}(\dot\gamma(t)) +
X_{(\gamma(t),t)},\hspace*{2.9cm}\mathrm{(E)}
\]
where $D/dt$ denotes the covariant derivative along $\gamma$
induced by the Levi--Civita connection of $g$ and $\dot\gamma$
represents the velocity field along $\gamma$.

Taking $p\in \m$ and $v\in T_p\m$, there exists a unique
inextensible smooth curve $\gamma : I \to \m$, $0\in I$, solution
of (E) which satisfies the initial conditions
\[
\gamma(0) = p,\quad \dot\gamma(0) = v.
\]
Such a curve is called {\sl complete} if $I = \R$ and
{\sl forward} (resp. {\sl backward}) {\sl complete} when $I = (a,b)$ with
$b=+\infty$ (resp. with $a=-\infty$).

In the recent article \cite{CRSfeb2012}, we have investigated the
completeness of the inextensible solutions of $(E)$ when $(\m,g)$
is a Riemannian manifold both, in the autonomous and in the
non--autonomous case, in particular when $X$ derives from a
potential. Furthermore, such results have been applied for
studying a special class of Lorentzian manifolds, which generalize
the parallely propagated waves (briefly, {\em pp--waves}), whose geodesic
completeness follows from the completeness of the trajectories of
a suitable version of $(E)$ stated in a Riemannian manifold (see
\cite{CRSfeb2012, CRSago2012}).

Here, our aim is investigating directly the completeness of the
inextensible solutions of $(E)$ in a Lorentzian manifold.
Nevertheless, such a problem is much more complicated in this
case. In fact, it includes, for example, the geodesic completeness
of $(\m,g)$ (i.e., the case $F = 0$ and $X = 0$). This problem is
handled in the Riemannian case by means of the classical
Hopf--Rinow theorem, but nothing similar holds in the Lorentzian one
(see the survey \cite{CS}). 
So, in order to consider a manageable Lorentzian
 problem, some additional assumptions will be made. This will
allow to introduce a new type of results, which may be extended in
further works.

So, as a physically meaningful framework, we will assume that $X$
derives from an (autonomous) potential ($X = - \nabla V$) and $F$
is skew--adjoint. In particular, when   timelike trajectories are
taken into account, $(E)$ will describe the dynamics of
relativistic particles subject to an electromagnetic field $F$
(i.e., being accelerated through a term which corresponds to the
Lorentz force law) plus  an exterior potential $V$
 (see, e.g., \cite[p. 88]{SW}).

This framework still includes the problem of geodesic
completeness. Hen\-ce, in order to prove the completeness of the
solutions of $(E)$, we will select a representative case were the
problem  has been solved for geodesics, namely, the case when a
timelike conformal vector field $K$ exists, the so--called {\em conformastationary 
spacetimes} (see \cite{RS}). In this ambient, the hypothesis
$F(K)=0$ means that {\em the  conformastationary observers} (those
moving along the integral curves of $K$) {\em do not feel any
electric force, but only magnetic ones}.

Finally, as a simplifying hypothesis, we will assume that $\m$ is
compact. About this hypothesis, recall that, on one hand, the
techniques will be extensible to the non--compact case (in the
spirit of \cite{RS}) and, on the other hand, the compact case is not by
any means trivial, even for geodesic completeness (see \cite{M,C,Kl}).

\begin{theorem}\label{main}
Let $(\m,g)$ be a compact Lorentzian manifold, $F$ a smooth
$(1,1)$ skew--adjoint tensor field on $\m$ and $V : \m \to \R$ a
smooth potential. If a timelike conformal Killing vector field $K$
exists such that $F(K) = 0$, then each inextensible solution of
\[
\hspace*{2.9cm}\frac{D\dot\gamma}{dt}(t)\ =\ F_{\gamma(t)}(\dot\gamma(t))
- \nabla V(\gamma(t))\hspace*{2.9cm}\mathrm{(E_0)}
\]
must be complete.
\end{theorem}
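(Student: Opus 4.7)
The plan is to control $\dot\gamma$ in an auxiliary Riemannian norm on $TM$ and invoke compactness of $M$ to extend the solution over any finite interval. Since $g$ is indefinite, a direct bound on $g(\dot\gamma,\dot\gamma)$ is insufficient, so the conformal Killing field $K$ will be used as a reference to split $\dot\gamma$ into a ``timelike'' and a ``spacelike'' component.

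First I would exploit the skew--adjointness of $F$ to establish a conservation law. Differentiating $E(t) := \frac{1}{2}g(\dot\gamma,\dot\gamma) + V(\gamma)$ along a solution of $(E_0)$ and using $g(F(\dot\gamma),\dot\gamma)=0$ yields $\dot E \equiv 0$. Hence $g(\dot\gamma,\dot\gamma) = 2(E - V(\gamma))$ stays bounded on the maximal interval $I$, because $V$ is bounded on the compact manifold $M$.

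Next I would track the quantity $C(t) := g(K,\dot\gamma(t))$. Writing $\mathcal{L}_K g = 2\sigma g$ for the conformal factor, the Koszul formula gives $g(\nabla_{\dot\gamma}K,\dot\gamma) = \sigma(\gamma)\,g(\dot\gamma,\dot\gamma)$, while the hypothesis $F(K)=0$ combined with skew--adjointness forces $g(K,F(\dot\gamma)) = -g(F(K),\dot\gamma) = 0$. Therefore
\[
\dot C(t) \;=\; \sigma(\gamma)\,g(\dot\gamma,\dot\gamma) - g(K,\nabla V)(\gamma) \;=\; 2\sigma(\gamma)\bigl(E - V(\gamma)\bigr) - K(V)(\gamma),
\]
which is a bounded function of $\gamma(t)\in M$ alone. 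Consequently $|C(t)|$ grows at most linearly in $t$.

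Now I would decompose $\dot\gamma = \alpha K + w$ with $g(K,w)=0$, so that $\alpha = C/g(K,K)$. Because $K$ is timelike and $M$ is compact, $g(K,K)$ is negative and bounded away from zero, hence $|\alpha(t)|$ is at most linear in $t$. On $K^\perp$ the metric $g$ is positive definite, and from $g(\dot\gamma,\dot\gamma) = \alpha^{2} g(K,K) + g(w,w)$ I obtain
\[
0 \;\le\; g(w,w) \;=\; 2(E - V(\gamma)) + \alpha^{2}\,|g(K,K)|,
\]
so $g(w,w)$ grows at most quadratically in $t$. Fixing an auxiliary Riemannian metric $g_R$ on $M$ (e.g.\ $g_R := g + 2\,K^{\flat}\otimes K^{\flat}/|g(K,K)|$), both components of $\dot\gamma$ are then uniformly bounded in $g_R$ on every finite subinterval of $I$.

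Finally, compactness of $M$ together with the $g_R$--bound on $\dot\gamma$ confines $(\gamma,\dot\gamma)$ to a compact subset of $TM$ on any bounded time interval; a standard ODE extension argument then rules out a finite endpoint of $I$, so $I=\mathbb{R}$. The main delicate point I expect is step (ii)--(iii): ensuring that the control of $g(K,\dot\gamma)$ genuinely translates into a Riemannian bound for $\dot\gamma$, which relies crucially on $K$ being \emph{timelike everywhere} on the compact manifold so that $g(K,K)$ is bounded away from zero.
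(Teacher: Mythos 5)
Your proposal is correct and follows essentially the same route as the paper: the energy conservation law bounds $g(\dot\gamma,\dot\gamma)$, the hypotheses $F(K)=0$ and ${\cal L}_Kg=2\sigma g$ bound $\frac{d}{dt}g(K,\dot\gamma)$ and hence $g(K,\dot\gamma)$ on finite intervals, and the two bounds together control $\dot\gamma$ in the auxiliary Riemannian metric $g_R=g+2K^\flat\otimes K^\flat/|g(K,K)|$ (the paper's Lemma \ref{compact}, written with $Z=K/\|K\|$), after which the standard extension criterion in $T\m$ concludes. Your explicit decomposition $\dot\gamma=\alpha K+w$ is only a cosmetic repackaging of the same estimate.
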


This paper is organized in the following way:  the main results in
the Riemannian case  are recalled in Section \ref{sec2} , the
specific Lorentzian tools are introduced in Section~\ref{sec3},
and the proof of Theorem \ref{main} is developed in Section
\ref{sec4}.

\section{Background about the Riemannian Case}\label{sec2}

In this section we outline the main results  obtained in the
Riemannian case (for their proofs, see \cite{CRSfeb2012}), even
though only some of the tools will be applicable to the Lorentzian
one. To this aim, we need some definitions.

Firstly, we recall that the $(1,1)$ tensor field $F$ can be decomposed
as
\[
F\ =\ S+H,
\]
where $S$ is the self--adjoint part of $F$ with respect to
$g$, and $H$ is the skew--adjoint one.

From now till the end of this section, let $(\m,g)$ be a Riemannian manifold.
Taking any $t \in \R$ and considering the slice $\m\times \{t\}$, we denote
\[\begin{split}
&S_{\mathrm{sup}}(t) := \sup_{\underset{\|v\|=1}{p\in\m, v\in T_p\m}} g(v,S_{(p,t)} v),
\quad S_{\mathrm{inf}}(t) := \inf_{\underset{\|v\|=1}{p\in\m, v\in T_p\m}} g(v,S_{(p,t)} v), \\
& \| S(t)\| : =
\max\{|S_{\mathrm{sup}}(t)|,|S_{\mathrm{inf}}(t)|\}.
\end{split}
\]
We say that $S$ is {\em bounded} (resp. {\em upper bounded}; {\em lower bounded}) {\em
along finite times} when, for each $T>0$ there exists a constant
$N_T$ such that
\begin{equation}\label{bx}
\|S(t)\|  < N_T \ \; (\hbox{resp.} \;
S_{\mathrm{sup}}(t)<N_T ;\ \; S_{\mathrm{inf}}(t)>- N_T)
\ \; \hbox{for all $t\in [-T,T]$.}
\end{equation}

Moreover, if $X$ is a vector field along $\pi$ and $d$ denotes the
distance canonically associated to the Riemannian metric $g$, we
say that $X$ {\em grows at most linearly in $\m$ along finite
times} if for each $T>0$ there exists $p_0\in \m$ and some
constants $A_T, C_T>0$ such that
\begin{equation}\label{bx2}
\sqrt{g(X_{(p,t)},X_{(p,t)})}\ \leq\ A_T \ d(p,p_0) +  C_T \quad \text{for all} \quad
(p,t)\in \m\times [-T,T].
\end{equation}
Obviously, conditions \eqref{bx}, \eqref{bx2} are independent of
the chosen point $p_0$.

\begin{theorem}\label{A1}
Let $(\m,g)$ be a complete Riemannian manifold and consider a
$(1,1)$ tensor field $F$ and a vector field $X$ both
time--dependent and smooth.

If $X$ grows at most linearly in $\m$ along finite times and the
self--adjoint part $S$ of $F$ is bounded (resp. upper bounded;
lower bounded) along finite times, then each inextensible solution
of {\rm (E)} must be complete (resp. forward complete; backward
complete).

In particular, if $\m$ is compact then any inextensible solution of
{\rm(E)} is complete for any $X$ and $F$.
\end{theorem}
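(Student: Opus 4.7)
The plan is as follows. Let $\gamma : I \to \m$ be the maximal (inextensible) solution of (E) with the prescribed initial conditions; write $I = (a,b)$. To prove completeness I would argue by contradiction, supposing $b < +\infty$ (the case $a > -\infty$ being symmetric and giving backward completeness). The strategy has two phases: first, derive an \emph{a priori} bound on the speed $\|\dot\gamma(t)\|$ on every $[0,T] \subset [0,b)$; second, use Hopf--Rinow plus the standard ODE escape lemma to prolong $\gamma$ past $b$, contradicting maximality.

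For the speed estimate, set $f(t) = \sqrt{g(\dot\gamma(t),\dot\gamma(t))}$ and $L(t) = \int_0^t f(s)\,ds$. Differentiating along the solution of (E) and writing $F = S + H$, the skew--adjointness of $H$ (so that $g(Hv,v)=0$) gives
$$\frac{d}{dt} f(t)^2 \;=\; 2\,g\bigl(S_{(\gamma(t),t)}\dot\gamma(t),\dot\gamma(t)\bigr) + 2\,g\bigl(X_{(\gamma(t),t)},\dot\gamma(t)\bigr).$$
On $[0,T]$ the first term is controlled by $2 S_{\mathrm{sup}}(t) f(t)^2 \leq 2 N_T f(t)^2$, using the upper--bound hypothesis on $S$. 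For the second, the arc--length bound $d(\gamma(t),p_0) \leq d(\gamma(0),p_0) + L(t)$ together with the linear--growth hypothesis on $X$ gives $\|X_{(\gamma(t),t)}\| \leq A_T L(t) + B_T$ on $[0,T]$ for a suitable constant $B_T$. Applying Cauchy--Schwarz and the elementary inequality $2ab \leq a^2 + b^2$ to absorb the cross terms then produces a differential inequality of the schematic form
$$\frac{d}{dt}\bigl(f(t)^2 + L(t)^2\bigr) \;\leq\; \alpha_T + \beta_T\bigl(f(t)^2 + L(t)^2\bigr),$$
with $\alpha_T,\beta_T$ depending only on $N_T, A_T, B_T$. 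Grönwall's lemma then yields a finite bound for $f$ (and $L$) on all of $[0,b)$.

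Once $f$ is bounded on $[0,b)$, the length of $\gamma|_{[0,b)}$ is finite, so $\gamma([0,b))$ is contained in a closed metric ball around $\gamma(0)$. Since $(\m,g)$ is a complete Riemannian manifold, Hopf--Rinow makes that ball compact, and together with the speed bound this confines $(\gamma(t),\dot\gamma(t))$ to a compact subset of $T\m$. Consequently the limits as $t \to b^-$ exist in $T\m$, and the standard ODE existence theorem applied at this limit point prolongs $\gamma$ beyond $b$, contradicting inextensibility. Thus forward completeness follows from the \emph{upper} bound on $S$ alone; the symmetric argument for $t < 0$, using $S_{\mathrm{inf}}$ in place of $S_{\mathrm{sup}}$, gives backward completeness under the \emph{lower} bound, and having both bounds yields $I = \R$. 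For the final assertion, on a compact $\m$ the functions $(p,v,t)\mapsto g(v,S_{(p,t)}v)$ (on the unit sphere bundle over $[-T,T]$) and $(p,t)\mapsto \|X_{(p,t)}\|$ are continuous on compact domains, so $S$ is bounded and $X$ is even uniformly bounded (hence trivially of at most linear growth) along finite times, and the general statement applies.

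The main obstacle I foresee is setting up the energy estimate correctly: a naive Grönwall argument for $f^2$ alone fails because $\|X_{(\gamma(t),t)}\|$ is controlled only via $L(t) = \int_0^t f$, so one is led to an integro--differential rather than a differential inequality. Coupling $f^2$ with $L^2$ into a single quantity $f^2 + L^2$, and exploiting the crucial cancellation $g(Hv,v)=0$ to eliminate the skew part of $F$, is what closes the estimate into a linear differential inequality accessible to Grönwall.
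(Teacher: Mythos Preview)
The paper does not actually prove Theorem~\ref{A1}; Section~\ref{sec2} merely \emph{records} it as background, deferring to \cite{CRSfeb2012} for the argument. The only structural hint the paper gives is that ``the proof of Theorem~\ref{A1} is based on the similar result proved when both $F$ and $X$ are time--independent as the non--autonomous case $(E)$ can be reduced to the autonomous one by working on the manifold $\m\times\R$''.

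Your proposal is a correct self--contained argument, but it takes a different route from that hint. Where the paper's source first disposes of the autonomous problem and then absorbs the time dependence by passing to $\m\times\R$ (so that $F$ and $X$ become time--independent tensors on the enlarged manifold), you work directly in the non--autonomous setting and close a Gr\"onwall inequality for the coupled quantity $f(t)^2+L(t)^2$. The key observations you use --- that $g(H\dot\gamma,\dot\gamma)=0$ removes the skew part, that $d(\gamma(t),p_0)\le d(\gamma(0),p_0)+L(t)$ feeds the linear growth of $X$ back into the estimate, and that Hopf--Rinow plus Lemma~\ref{extend}--type reasoning finishes the contradiction --- are exactly the ingredients one expects. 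Your direct approach is arguably more transparent for seeing why the upper bound on $S$ alone suffices for forward completeness (and the lower bound for backward), whereas the reduction to $\m\times\R$ has the organizational advantage of proving a single autonomous statement once and for all. One small point of presentation: since you are assuming $b<+\infty$, you should fix once a $T\ge b$ and use the constants $N_T,A_T,B_T$ uniformly on $[0,b)$, rather than speaking of ``every $[0,T]\subset[0,b)$'' and then passing to $[0,b)$; the logic is the same but the phrasing is cleaner.
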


Let us point out that the hypotheses in Theorem \ref{A1} are optimal
(see \cite[Example 1]{CRSfeb2012}) and
do not require that $X$ is conservative, i.e. it depends on a potential.
Now, let $V:\m\times\R \rightarrow \R$ be a smooth time--dependent
potential, and emphasize as $\nabla^{\m}V$ the gradient of the
function $p \in \m \mapsto V(p,t)\in \R$, for each fixed $t\in \R$.
In this setting, Theorem \ref{A1} reduces to the following
result.

\begin{corollary}\label{A10}
Let $(\m,g)$ be a complete Riemannian manifold, consider a $(1,1)$
tensor field $F$, eventually time--dependent, with self--adjoint
component $S$, and let $V:\m\times\R \rightarrow \R$ be a smooth
potential. If $S$ is bounded along finite times and $\nabla^\m
V(p,t)$ grows at most linearly in $\m$ along finite times, then
each inextensible solution of $\mathrm{(E)}$ must be complete.
\end{corollary}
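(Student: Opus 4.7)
The plan is to read Corollary \ref{A10} as a direct specialization of Theorem \ref{A1}, taking the vector field $X$ along $\pi$ that appears in equation (E) to be $X_{(p,t)} := -\nabla^{\m} V(p,t)$. Since $V$ is smooth on $\m \times \R$, this $X$ is automatically a smooth time-dependent vector field along $\pi$, so the regularity hypothesis of Theorem \ref{A1} is satisfied. The equation (E) with this choice of $X$ is precisely the equation whose solutions we must show are complete.

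Next I would check, one by one, that the two analytic hypotheses of Theorem \ref{A1} hold. The bound on the self-adjoint part $S$ of $F$ along finite times is assumed verbatim. For the growth condition on $X$, note that
\[
\sqrt{g\bigl(X_{(p,t)},X_{(p,t)}\bigr)}\ =\ \bigl|\nabla^{\m} V(p,t)\bigr|,
\]
so the assumption that $\nabla^{\m} V$ grows at most linearly in $\m$ along finite times is literally condition \eqref{bx2} for $X$. Thus both hypotheses of Theorem \ref{A1} are met, and invoking that theorem immediately yields completeness of every inextensible solution of (E).

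There is no genuine obstacle: the corollary is a bookkeeping step that isolates the physically relevant conservative case ($X = -\nabla^{\m} V$) and rephrases the linear-growth condition directly in terms of the potential, which is the form that will be needed later (in particular, when passing through the Lorentzian reduction to prove Theorem \ref{main}). The entire analytic content—the Gronwall-type argument ensuring that $|\dot\gamma(t)|$ cannot blow up in finite time, combined with metric completeness of $\m$ to prevent $\gamma(t)$ from escaping to infinity—has already been carried out in the proof of Theorem \ref{A1} in \cite{CRSfeb2012}.
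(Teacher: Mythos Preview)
Your proposal is correct and is exactly the paper's approach: the text introduces Corollary~\ref{A10} simply as the specialization of Theorem~\ref{A1} to the case $X_{(p,t)} = -\nabla^{\m}V(p,t)$, with no separate argument given. One small inaccuracy in your commentary: this corollary is not actually invoked in the proof of Theorem~\ref{main}, which relies instead on Lemmas~\ref{extend} and~\ref{compact}.
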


The proof of Theorem \ref{A1} is based on the similar result proved
when both $F$ and $X$ are time--independent as the non--autonomous case $(E)$ can be
reduced to the autonomous one by working on the manifold $\m\times \R$
(see \cite[Proposition 1]{CRSfeb2012}).
On the contrary, when $X$ is a time--dependent conservative vector field,
another result can be stated but with a direct proof
in the non--autonomous case.
In order to describe such a result,
we need a further notion.

A function $U: \m\times \R\rightarrow \R$ {\em grows at most
quadratically along finite times} if for each $T>0$ there exist $p_0\in\m$
and some constants $A_T, C_T>0$ such that
\[
U(p,t)\ \leq\ A_T \ d^2(p,p_0) +  C_T \quad \text{for all} \;
(p,t)\in \m\times [-T,T]
\]
(again, this property is independent of the chosen $p_0$).

\begin{theorem}\label{A02}
Let $(\m,g)$ be a complete Riemannian manifold, $F$ a smooth
time--dependent $(1,1)$ tensor field on $\m$ with self--adjoint
component $S$ and $V:\m\times\R \to\R$ a smooth time--dependent
potential.

Assume that $S$ is bounded (resp. upper bounded; lower bounded)
along finite times and $-V$ grows at most  quadratically along
finite times.

If also $|\partial V/\partial t|  :\m\times \R\rightarrow \R$ (resp.
$\partial V/\partial t$; -$\partial V/\partial t$) grows at most
quadratically along finite times, then each inextensible solution
of
\[
\hspace*{3cm}\frac{D\dot\gamma}{dt}(t)\ =\ F_{(\gamma(t),t)}(\dot\gamma(t)) - \nabla^\m V (\gamma(t),t)\hspace*{2cm}
\mathrm{(E^*)}
\]
must be complete (resp. forward complete; backward
complete).
\end{theorem}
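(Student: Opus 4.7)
The plan is to extract quantitative control on $\dot\gamma$ from two scalar quantities along any solution and then convert this into length-boundedness for an auxiliary Riemannian metric on the compact $\m$. First, since $F$ is skew-adjoint, $g(F(\dot\gamma),\dot\gamma)=0$, and differentiating along $(E_0)$ shows that the energy
\[
E(t)\ :=\ \tfrac{1}{2}g(\dot\gamma(t),\dot\gamma(t)) + V(\gamma(t))
\]
is conserved. Since $V$ is bounded on $\m$, the identity $g(\dot\gamma,\dot\gamma) \equiv 2(E_0-V(\gamma))$ is a uniform bound in absolute value. In the Lorentzian setting this is still far from controlling $\dot\gamma$ itself: a blow-up of $\dot\gamma$ along a null direction is perfectly compatible with $g(\dot\gamma,\dot\gamma)$ remaining bounded, and this is the essential new difficulty compared to Theorem \ref{A1}.

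The remaining hypotheses are tailored to recover that missing degree of freedom. I would differentiate $g(\dot\gamma,K)$ along $\gamma$: the term $g(F(\dot\gamma),K)=-g(\dot\gamma,F(K))$ vanishes thanks to $F(K)=0$ and the skew-adjointness of $F$; the conformal Killing equation yields $g(\nabla_{\dot\gamma}K,\dot\gamma)=\sigma(\gamma)\,g(\dot\gamma,\dot\gamma)$ for the conformal factor $\sigma$; and $g(\nabla V,K)=K(V)$. Substituting the energy identity gives
\[
\frac{d}{dt}g(\dot\gamma,K)\ =\ 2\sigma(\gamma)\bigl(E_0-V(\gamma)\bigr) - K(V)(\gamma),
\]
whose right-hand side is bounded on the compact $\m$, so $|g(\dot\gamma,K)|$ grows at most linearly in $|t|$.

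With the ``$K$-component'' $g(\dot\gamma,K)$ under linear control and $g(\dot\gamma,\dot\gamma)$ bounded, I would introduce the auxiliary Riemannian metric
\[
g_R(X,Y)\ :=\ g(X,Y) - 2\,\frac{g(X,K)\,g(Y,K)}{g(K,K)},
\]
which is positive-definite because $K$ is timelike and $|g(K,K)|$ is bounded below by a positive constant on compact $\m$. A direct substitution shows that $g_R(\dot\gamma,\dot\gamma)$ grows at most quadratically in $|t|$, hence $|\dot\gamma|_R$ grows at most linearly on every bounded time-interval. Any Riemannian metric on a compact manifold is complete, so if the maximal interval of $\gamma$ were $(a,b)$ with $b<+\infty$, the $g_R$-length $\int_0^b|\dot\gamma|_R\,dt$ would be finite, forcing $\gamma(t)$ to converge to some $p\in\m$. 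In a chart around $p$, the components of $\dot\gamma$ are then bounded (by equivalence of $g_R$ with the Euclidean metric on the chart), so standard ODE extension contradicts the inextensibility of $\gamma$; the symmetric argument treats $a>-\infty$.

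The main obstacle is precisely the absence of a Lorentzian analogue of ``bounded kinetic energy implies bounded velocity''. The whole architecture is organized around compensating for this via $K$: the three hypotheses---skew-adjointness of $F$, $F(K)=0$, and $K$ conformal Killing---combine in exactly one place, the derivative of $g(\dot\gamma,K)$, and removing any one of them would destroy the linear-growth estimate on which the Riemannian length bound rests.
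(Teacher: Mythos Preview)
Your argument is a proof of the wrong theorem: everything you wrote addresses Theorem~\ref{main}, not Theorem~\ref{A02}. The statement at hand concerns a \emph{Riemannian} manifold $(\m,g)$ that is merely complete (not compact), a \emph{time-dependent} $(1,1)$ tensor $F$ whose self-adjoint part $S$ is controlled, and a \emph{time-dependent} potential $V$ subject to quadratic-growth bounds on $-V$ and on $|\partial V/\partial t|$. No Lorentzian signature, no timelike conformal Killing field $K$, no hypothesis $F(K)=0$, and no skew-adjointness of $F$ are assumed here; your proof invokes all of them. Conversely, nothing in your proof uses the actual hypotheses of Theorem~\ref{A02}: the quadratic growth of $-V$ and of $|\partial V/\partial t|$, the bound on $S$, or the non-autonomous structure of $(E^*)$.

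A genuine proof of Theorem~\ref{A02} (which the paper only quotes from \cite{CRSfeb2012}) proceeds very differently. In the Riemannian setting $g(\dot\gamma,\dot\gamma)$ already dominates $|\dot\gamma|$, so no auxiliary metric or ``$K$-component'' is needed; the difficulty is instead that the energy $\tfrac12 g(\dot\gamma,\dot\gamma)+V(\gamma,t)$ is \emph{not} conserved when $F$ has a self-adjoint part and $V$ depends on $t$: its derivative picks up $g(S\dot\gamma,\dot\gamma)$ and $\partial V/\partial t$. The boundedness of $S$ and the quadratic-growth hypotheses on $-V$ and $|\partial V/\partial t|$ are designed so that a Gronwall-type inequality closes on a quantity comparable to $|\dot\gamma|^2$ (coupled, through the distance $d(\gamma,p_0)$, to the quadratic bounds), yielding at-most-exponential growth of $|\dot\gamma|$ on any finite time interval and hence finite Riemannian length; completeness of $(\m,g)$ then contradicts inextensibility. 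Your Lorentzian machinery does not touch this mechanism. (For what it is worth, the argument you did write is essentially the paper's own proof of Theorem~\ref{main}; it simply does not belong here.)
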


When particularized to
autonomous systems, Theorem \ref{A02} extends the results by Weinstein and Marsden
in \cite{WM} and in \cite[Theorem 3.7.15]{AM}. Furthermore, in the
non--autonomous case, it generalizes widely the results by
Gordon in \cite{Go}.

\section{The Lorentzian Setting}\label{sec3}

From now till the end of this paper, let $(\m,g)$ be a Lorentzian manifold
and assume that
$F$ is a time--independent smooth $(1,1)$ tensor field on $\m$ and
$X$ is a time--independent smooth vector field on $\m$,
so that we consider the autonomous problem
\[
\hspace*{2.9cm}\frac{D\dot\gamma}{dt}(t)\ =\ F_{\gamma(t)}(\dot\gamma(t))
+ X_{\gamma(t)}.\hspace*{2.9cm}\mathrm{(\tilde{E})}
\]

First of all,  recall the following result which is a direct
consequence of the existence and uniqueness of solutions to second
order differential equations (see Lemma 4 and Remark 6 in
\cite{CRSfeb2012}, that apply also in the autonomous Lorentzian
case).

\begin{lemma}\label{vector_field}
There exists a unique vector field $G$ on the tangent bundle $T\m$
such that the curves $t\mapsto (\gamma(t),\dot\gamma(t))$ are the
integral curves of $G$ for any  solution $\gamma$ of equation
$(\tilde{E})$.
\end{lemma}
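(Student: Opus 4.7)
The plan is to reduce the second-order equation $(\tilde E)$ to a first-order system on $T\m$ and read off the desired vector field $G$ from that system, exactly as in the classical construction of the geodesic spray. I would work in a chart $(x^1,\dots,x^n)$ on an open set $U\subset\m$, with induced natural coordinates $(x^i,v^i)$ on $TU\subset T\m$. A curve $\gamma$ with local expression $(x^i(t))$ and velocity $(v^i(t))$ solves $(\tilde E)$ in $U$ if and only if
\[
\dot x^i(t)\ =\ v^i(t),\qquad \dot v^i(t)\ =\ -\Gamma^i_{jk}(x(t))\,v^j(t) v^k(t)+F^i_j(x(t))\,v^j(t)+X^i(x(t)),
\]
where $\Gamma^i_{jk}$ are the Christoffel symbols of $g$ and $F^i_j$, $X^i$ are the components of $F$ and $X$ in the chart.

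I would then define, locally on $TU$,
\[
G_{(p,v)}\ :=\ v^i\,\frac{\partial}{\partial x^i}\bigg|_{(p,v)}+\bigl[-\Gamma^i_{jk}(p)\,v^j v^k+F^i_j(p)\,v^j+X^i(p)\bigr]\frac{\partial}{\partial v^i}\bigg|_{(p,v)},
\]
and observe, by inspection of the displayed system, that the integral curves of $G$ are precisely the curves $t\mapsto(\gamma(t),\dot\gamma(t))$ with $\gamma$ a solution of $(\tilde E)$. Smoothness of $G$ is clear from its local expression together with the smoothness of $g$, $F$ and $X$. To see that these local vector fields glue to a globally defined $G$ on $T\m$, I would invoke the intrinsic character of the integral-curve characterization: on the overlap of two charts, both local candidates have the same integral curves through each point $(p,v)$ (namely, the canonical lift of the unique inextensible solution of $(\tilde E)$ with initial data $\gamma(0)=p$, $\dot\gamma(0)=v$), so they coincide pointwise.

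Uniqueness of $G$ is then immediate from the same existence-and-uniqueness statement for second-order ODEs: any vector field $G'$ on $T\m$ with the stated property must satisfy
\[
G'_{(p,v)}\ =\ \frac{d}{dt}\bigg|_{t=0}(\gamma(t),\dot\gamma(t))\ =\ G_{(p,v)}
\]
at every $(p,v)\in T\m$. The only slightly technical point is the coordinate invariance of the local formula, and I would handle it either by the gluing argument above or, if a direct verification is preferred, by combining the standard transformation law of the Christoffel symbols with the tensorial behaviour of $F$ and $X$. I do not anticipate a real obstacle, since the lemma is a packaging of the classical fact that a semi-spray on $T\m$ encodes any second-order ODE of the given form.
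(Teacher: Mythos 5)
Your proposal is correct: the local-coordinate reduction to a first-order system on $T\m$, the gluing via the intrinsic characterization of integral curves, and the immediate uniqueness argument are exactly the standard semi-spray construction. The paper itself gives no proof, deferring to Lemma~4 and Remark~6 of \cite{CRSfeb2012}, and your argument is precisely the classical one that citation packages, so there is nothing to add.
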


Recall that an integral curve $\rho$ of a vector field defined on some bounded
interval $[a,b)$, $b<+\infty$, can be extended to $b$ (as an
integral curve) if and only if there exists a sequence
$\{t_n\}_n$, $t_n\nearrow b$, such that $\{\rho(t_n)\}_n$
converges (see \cite[Lemma 1.56]{ON}). The following technical
result follows directly from this fact and Lemma \ref{vector_field}.

\begin{lemma}\label{extend}
Let $\gamma: [0,b) \to \m$ be a solution of equation
$(\tilde{E})$ with $0<b<+\infty$. The curve $\gamma$ can be
extended to $b$ as a solution of $(\tilde{E})$ if and only if
there exists a sequence $\{t_n\}_n \subset [0,b)$ such that $t_n
\to b^-$ and the sequence $\{\gamma(t_n),\dot\gamma(t_n)\}_n$ is
convergent in $T\m$.
\end{lemma}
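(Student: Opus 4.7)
The plan is to reduce the extension question for solutions of $(\tilde{E})$ to the standard extension criterion for integral curves of the vector field $G$ supplied by Lemma \ref{vector_field}, namely the fact (cited in the excerpt from O'Neill) that an integral curve defined on $[a,b)$ with $b<+\infty$ extends to $b$ if and only if some sequence $t_n\nearrow b$ yields a convergent sequence of values in the ambient manifold.

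First I would record the two-way correspondence implicit in Lemma \ref{vector_field}: a smooth curve $\gamma$ on an interval $J$ solves $(\tilde{E})$ on $J$ if and only if $\rho(t):=(\gamma(t),\dot\gamma(t))$ is an integral curve of $G$ on $J$; moreover every integral curve $\rho$ of $G$ on $J$ automatically has this form, because the horizontal component of $G$ at $(p,v)\in T\m$ is $v$, so the base curve of $\rho$ has velocity equal to the fibre component of $\rho$. With this in hand, the ``only if'' direction is immediate: if $\gamma$ extends to a smooth solution $\bar\gamma:[0,b]\to\m$ of $(\tilde{E})$, then $(\bar\gamma,\dot{\bar\gamma})$ is continuous on $[0,b]$, and any sequence $t_n\nearrow b$ yields a convergent $\{(\gamma(t_n),\dot\gamma(t_n))\}_n$ in $T\m$.

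For the ``if'' direction, assume $\{(\gamma(t_n),\dot\gamma(t_n))\}_n$ converges in $T\m$ along some $t_n\nearrow b$. Then $\rho$ is an integral curve of $G$ on $[0,b)$ meeting the hypothesis of the O'Neill extension result, so $\rho$ extends to an integral curve $\bar\rho:[0,b]\to T\m$ of $G$; by the correspondence above, the base curve of $\bar\rho$ is the desired smooth extension of $\gamma$ that still solves $(\tilde{E})$. The only delicate point, and it is not really an obstacle, is confirming that the extension produced at the level of $T\m$ genuinely descends to an extension at the level of $\m$ which still satisfies $(\tilde{E})$, rather than producing merely a continuous base curve in $\m$; this is precisely what the structural form $(\gamma,\dot\gamma)$ of integral curves of $G$, recalled in the first paragraph, guarantees.
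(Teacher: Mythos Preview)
Your argument is correct and is exactly the approach the paper takes: the paper states that the lemma ``follows directly'' from the O'Neill extension criterion for integral curves together with Lemma~\ref{vector_field}, and you have simply spelled out that deduction in detail. Your added remarks on the two-way correspondence and on why the extended integral curve of $G$ descends to a genuine solution of $(\tilde{E})$ are accurate and make explicit what the paper leaves implicit.
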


Assume that the vector field $X$ derives from a smooth potential
$V: \m \to \R$, i.e.,  $X = - \nabla V$, and, hence, $(\tilde{E})$
reduces to $(E_0)$.
 Furthermore, suppose that
$F$ is skew--adjoint, so it results
$g(Y,F(Y)) = 0$ for any vector field $Y$.
In this setting, if $\gamma : (a,b) \to \R$ is a solution of
$(E_0)$, then
\[\begin{split}
\frac{d}{dt}(g(\dot\gamma(t),\dot\gamma(t) + 2 V(\gamma(t)))\ =& \
2 g(\dot\gamma(t),F(\dot\gamma(t)) - \nabla V(\gamma(t)))\\
&\quad + 2 g(\nabla V(\gamma(t)),\dot\gamma(t))\\
=& \ 0\qquad \hbox{for all $t \in (a,b)$}
\end{split}
\]
and a constant $c \in \R$ exists such that
\begin{equation}\label{constant}
g(\dot\gamma(t),\dot\gamma(t)) + 2 V(\gamma(t))\ = \ c \quad \hbox{for all $t \in (a,b)$.}
\end{equation}

Let us point out that, if $\m$ is a compact Lorentzian manifold,
the conservation law \eqref{constant} implies that
$g(\dot\gamma(t),\dot\gamma(t))$ is bounded. Anyway, unlike the
Riemannian case, this is not enough for applying Lemma
\ref{extend} and so proving the completeness of all the
inextendible solutions of $(E_0)$.

\begin{example}\label{incomplete}
(1) There are examples of compact Lorentzian manifolds which have
incomplete inextensible geodesics, i.e. solutions of $(E_0)$ in
the simplest case $F=0$, $V=0$. In fact, if $(\m,g)$ is a
Clifton--Pohl torus, then it is a compact Lorentzian manifold but
it is not geodesically complete (see \cite[Example 7.16]{ON}).

 (2)
There are examples of geodesically complete Lorentzian manifolds
$(\m,g)$ with bounded   $\|F\|^2=|\sum F^{\mu\nu}F_{\mu\nu}|$ and
$\|X\|^2=|\sum X^{\mu}X_{\mu}|$ such that they admit incomplete
inextensible solutions of $(\tilde{E})$. Indeed, it is enough to
consider $\m = \R^2$ and $g = dx \otimes dy + dy \otimes dx$ with
$F = 0$ and $X = 2 x^3 \frac{\partial}{\partial x}$. Direct
computations show that the corresponding problem $(\tilde{E})$ has
incomplete inextensible solutions.
\end{example}

It is a relevant fact that a compact Lorentzian manifold is
geodesically complete if it admits a timelike conformal vector
field (see \cite{RS}). Thus, it is natural to assume the existence of
such infinitesimal conformal symmetry to deal with the
extendibility of the solutions of $(E_0)$.

\begin{definition} \label{confKill}
A vector field $K$ is called {\em conformal Killing}, or simply
{\em conformal}, if the Lie derivative with respect to $K$, ${\cal
L}_K$, satisfies
\begin{equation}\label{Lie}
{\cal L}_K g= 2 \sigma g
\end{equation}
(the local flows of $K$ are conformal maps) for some $\sigma \in
C^\infty(\m)$.  In the case $\sigma = 0$, $K$ is called {\em
Killing}.
\end{definition}

In particular, if $K$ is a conformal vector field and $\gamma$ is
a geodesic, we have
\[
\frac{d}{dt}g(K,\dot\gamma)\ =\ \sigma(\gamma) g(\dot\gamma,\dot\gamma), \quad \hbox{with
$g(\dot\gamma,\dot\gamma)$ constant.}
\]
Hence, if $K$ is Killing then $g(K,\dot\gamma)$ is a constant.

More in general, if $\gamma : I \to \m$ is any curve, from \eqref{Lie}
it follows
\[
g(\nabla_{\dot\gamma}K,\dot\gamma)\ =\ \sigma(\gamma)\ g(\dot\gamma,\dot\gamma)
\]
which implies
\begin{equation}\label{Lie1}
\frac{d}{dt}g(K,\dot\gamma)\ =\ g(K,\frac{D\dot\gamma}{dt})\ +\ \sigma(\gamma)\ g(\dot\gamma,\dot\gamma).
\end{equation}

As already remarked, the compactness of $\m$ and the boundedness
of $g(\dot\gamma,\dot\gamma)$ are not enough to assure that the
image of $\dot\gamma$ is contained in a compact subset of the
tangent bundle $T\m$. So, in order to prove our main result, some
extra Lorentzian tools are needed.

\begin{lemma}\label{compact}
Let $(\m,g)$ be a (time-orientable) compact Lorentzian manifold
with a  timelike vector field $Z$ such that $g(Z,Z) = -1$. Assume
that $F$ is a skew--adjoint (1,1) tensor field and $V$ is a smooth
potential on $\m$. If $\gamma: I \to \m$ is a solution of $(E_0)$
such that $g(Z,\dot\gamma)$ is bounded in $I$, then there exists a
compact subset $C$ of $T\m$ which contains the image of
$\dot\gamma$.
\end{lemma}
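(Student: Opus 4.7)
The plan is to build an auxiliary Riemannian metric on $\m$ out of the unit timelike vector field $Z$, and then argue that $\dot\gamma$ stays inside a bounded ball of this Riemannian metric, which (by compactness of $\m$) is compact in $T\m$.

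First I would use the conservation law already established in the text: since $F$ is skew--adjoint and $X = -\nabla V$, every solution of $(E_0)$ satisfies $g(\dot\gamma(t),\dot\gamma(t)) + 2V(\gamma(t)) = c$ for some constant $c$. Because $\m$ is compact, $V$ is bounded, so $g(\dot\gamma,\dot\gamma)$ is bounded on $I$. This gives me control on the ``Lorentzian norm'' of $\dot\gamma$, but of course not on the Riemannian length, because the Lorentzian inner product is indefinite.

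Next, at each point $p \in \m$ decompose $\dot\gamma(t) = \alpha(t) Z_{\gamma(t)} + W(t)$ with $W(t) \in Z^{\perp}_{\gamma(t)}$. Since $g(Z,Z)=-1$, one has $\alpha(t) = -g(Z_{\gamma(t)},\dot\gamma(t))$, which is bounded by hypothesis. Because $Z$ is timelike, $g$ restricted to $Z^{\perp}$ is positive definite, and a direct expansion gives $g(\dot\gamma,\dot\gamma) = -\alpha^2 + g(W,W)$, so $g(W,W) = g(\dot\gamma,\dot\gamma) + \alpha^2$ is also bounded.

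To turn these two pieces of information into a single compactness statement, I would introduce the standard Riemannian metric associated to $Z$, namely
\[
g_R(v,w) = g(v,w) + 2\, g(Z,v)\, g(Z,w),
\]
which is positive definite precisely because $Z$ is unit timelike. A straightforward computation shows $g_R(\dot\gamma,\dot\gamma) = \alpha^2 + g(W,W)$, hence $g_R(\dot\gamma,\dot\gamma)$ is bounded on $I$. Since $\m$ is compact, every closed $g_R$--ball bundle $\{v \in T\m : g_R(v,v) \leq R\}$ is a compact subset of $T\m$, so taking $C$ to be such a ball bundle for a sufficiently large $R$ gives the required compact set. The only subtle point, and the one I would double-check, is the positivity of $g_R$ on $Z^{\perp}$ and the identity $g_R(\dot\gamma,\dot\gamma)=\alpha^2+g(W,W)$; beyond that the argument is essentially linear algebra plus compactness of $\m$.
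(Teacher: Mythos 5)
Your proposal is correct and follows essentially the same route as the paper: both use the conservation law \eqref{constant} to bound $g(\dot\gamma,\dot\gamma)$, introduce the auxiliary Riemannian metric $g_R = g + 2\,\omega\otimes\omega$ with $\omega(\cdot)=g(Z,\cdot)$, and observe that $g_R(\dot\gamma,\dot\gamma) = g(\dot\gamma,\dot\gamma) + 2\,g(Z,\dot\gamma)^2$ is bounded, so $\dot\gamma$ stays in a compact ball bundle of $T\m$. The orthogonal decomposition $\dot\gamma=\alpha Z+W$ is just an extra intermediate step making the positivity of $g_R$ explicit; the paper obtains the same bound directly.
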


\begin{proof}
As $\gamma$ is a solution of $(E_0)$ and $\m$ is compact, from
\eqref{constant} it follows that $g(\dot\gamma,\dot\gamma)$ is
bounded. On the other hand, consider the 1--form $\omega$
equivalent to $Z$ with respect to $g$, i.e. $\omega(X) = g(Z,X)$
for any vector field $X$, and the related
tensor field $g_R = g + 2 \omega \otimes \omega$ which is a
Riemannian metric on $\m$. By definition, it results
\[
g_R(\dot\gamma,\dot\gamma)\ =\ g(\dot\gamma,\dot\gamma) + 2
g(Z,\dot\gamma)^2;
\]
whence, in our hypotheses, $g_R(\dot\gamma,\dot\gamma)$ is bounded
on $I$. Thus, a constant $c> 0$ exists such that
\[
(\gamma(I),\dot\gamma(I)) \ \subset\ C, \quad C := \{(p,v) \in T\m :\; p\in\m,\ g_R(v,v) \le c\},
\]
with $C$ compact in $T\m$.
\end{proof}

\section{Proof of the Main Result}\label{sec4}

Now, we are ready to prove our main result.

\begin{proof}[{\sl Proof of Theorem \ref{main}}]
Without loss of generality, let $I= [0,b)$, $0<b<+\infty$, be the domain of a
for\-ward--inexten\-si\-ble solution $\gamma$ of $\mathrm{(E_0)}$.
As $F$ is skew--adjoint and null on $K$, it results
\[
g(K,F_\gamma(\dot\gamma)) \ =\ - g(F_\gamma(K),\dot\gamma) = 0,
\]
then from \eqref{Lie1} it follows
\[\begin{split}
\frac{d}{dt}g(K,\dot\gamma)\ =&\ g(K,F_\gamma(\dot\gamma)) - g(K,\nabla V(\gamma))\ +\ \sigma(\gamma)\ g(\dot\gamma,\dot\gamma)\\
=& \ - g(K,\nabla V(\gamma))\ +\ \sigma(\gamma)\ g(\dot\gamma,\dot\gamma).
\end{split}
\]
From the compactness of $\m$ and \eqref{constant} we have that both $g(K,\nabla V(\gamma))$ and
$\sigma(\gamma)\ g(\dot\gamma,\dot\gamma)$ are bounded on $I$, then $c_1 > 0$ exists
such that
\[
\big|\frac{d}{dt}g(K,\dot\gamma)\big|\ \le \ c_1\quad \hbox{on $I$;}
\]
whence, as $I$ is a bounded interval, a constant $c_2 > 0$ exists
such that
\begin{equation}\label{due}
|g(K,\dot\gamma)|\ \le \ c_2\quad \hbox{on $I$.}
\end{equation}
Now, let us define $Z = \frac{K}{\|K\|}$, with $\|K\|^2 = - g(K,K) > 0$ as $K$ is timelike.
So, $Z$ is a timelike vector field with $g(Z,Z) = -1$, and \eqref{due} implies
\[
|g(Z,\dot\gamma)|\ \le \ m c_2\quad \hbox{on $I$,}
\]
where $\displaystyle m = \max_\m \|K\|^{-1} > 0$ (which exists as $\m$ is compact).\\
Then, by applying Lemmas \ref{compact} and \ref{extend} we yield a contradiction.
\end{proof}

\begin{remark}
In particular, we may consider $F = 0$ and $V=0$ in Theorem
\ref{main} and, therefore,  this result extends the theorem on
completeness  in \cite{RS}.
\end{remark}


\end{document}